\title{The Group of Boundary Fixing Homeomorphisms of the Disc is Not Left-Orderable}
\author{James Hyde}
\newcommand\reallywidehat[1]{%
\savestack{\tmpbox}{\stretchto{%
  \scaleto{%
    \scalerel*[\widthof{\ensuremath{#1}}]{\kern-.6pt\bigwedge\kern-.6pt}%
    {\rule[-\textheight/2]{1ex}{\textheight}}
  }{\textheight}%
}{0.5ex}}%
\stackon[1pt]{#1}{\tmpbox}%
}
\newtheorem{thm}{Theorem}[section]
\newtheorem{lem}[thm]{Lemma}
\theoremstyle{definition}
\newtheorem{defn}[thm]{Definition}
\newtheorem*{thm*}{Theorem}
\newtheorem*{defn*}{Definition}
\newtheorem*{cor*}{Corollary}
\newcommand{\Ss}[2]{\left\{ #1 \hspace{1mm} \middle | \hspace{1mm} #2 \right\}}
\newcommand{\Rr}{\mathbb{R}}
\newcommand{\homeo}{\operatorname{Homeo}}
\newcommand{\abbor}{\mathrel{\ll}}
\newcommand{\HD}{\homeo (I^2;\delta I^2)}
\newcommand{\HI}{\homeo (I;\delta I)}
\newcommand{\An}[1]{\left\langle #1 \right\rangle}
\newcommand{\idnt}{\mathbf{1}}
\begin{document}

\maketitle

\begin{abstract}
A left-order on a group $G$ is a total order $<$ on $G$ such that for any $f$, $g$ and $h$ in $G$ we have $f < g \Leftrightarrow hf < hg$. We construct a finitely generated subgroup $H$ of $\HD$, the group of those homeomorphisms of the disc that fix the boundary pointwise, and show $H$ does not admit a left-order. Since any left-order on $\HD$ would restrict to a left-order on $H$ this shows that $\HD$ does not admit a left-order. Since $\HI$ admits a left-order it follows that neither $H$ nor $\HD$ embed in $\HI$.
\end{abstract}

\section{Introduction}

\begin{defn}
Let $G$ be a group. A total order $<$ on $G$ is a \emph{left-order} if for any $f$, $g$ and $h$ in $G$ we have $f < g \Leftrightarrow hf < hg$.
\end{defn}

Right-orders may be defined analogously to left-orders but we deal only with left-orders. We will write $\HD$ for the group of those homeomorphisms of the disc that fix the boundary pointwise.

Whether $\HD$ admits a left-order has been asked
by Calegari, Clay, Deroin, Navas, Rivas and Rolfsen in \cite{Calegari}, \cite{Navas1}, \cite{CalegariRolfsen}, \cite{ClayRolfsen} and \cite{Navas2}. Also Bergman's question $17.20$ in \cite{Kourovka} is equivalent to this question by Proposition $8.8$ of \cite{ClayRolfsen}.

We resolve this question with Theorem \ref{thm:main2} below.

\begin{thm}\label{thm:main2}
If $M$ is a manifold of dimension $n \geq 2$, possibly with boundary, and $S$ is a proper closed subset of $M$ then $\homeo(M;S)$, the group of those homeomorphisms of $M$ that pointwise stabilise $S$, does not admit a left-order.
\end{thm}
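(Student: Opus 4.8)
The plan is to reduce the general statement to the two-dimensional disc case and then to invoke the finitely generated, non-left-orderable subgroup $H \le \HD$ whose construction is the substance of this paper. The reduction has two moves: a \emph{localisation} move that plants a closed $n$-ball inside $M \setminus S$, and a \emph{suspension} move that raises the dimension from $2$ to $n$ while continuing to fix the entire boundary sphere. Throughout I would use the one soft fact about left-orders that I need: if a group admits a left-order then so does every subgroup (by restriction), so it suffices to exhibit a single non-left-orderable subgroup of $\homeo(M;S)$.

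First I would localise. Since $S$ is closed, $M \setminus S$ is open, and since $S$ is proper it is nonempty; as the boundary $\partial M$ is nowhere dense in $M$, the open set $M \setminus S$ must contain a point $p$ lying in the interior of $M$. A chart around $p$ then supplies an embedded closed $n$-ball $B \subseteq M \setminus S$. Extending each homeomorphism of $B$ that fixes $\partial B$ pointwise by the identity on $M \setminus B$ yields an injective homomorphism $\homeo(B;\partial B) \hookrightarrow \homeo(M;S)$: the extension is continuous because the two pieces agree (both being the identity) along $\partial B$, and it fixes $S$ because $S \subseteq M \setminus B$. Identifying $(B,\partial B)$ with $(D^n, S^{n-1})$, it therefore suffices to show that $\homeo(D^n;S^{n-1})$ is not left-orderable for every $n \ge 2$.

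Next I would raise the dimension by a slice-wise coning. Writing $D^n = \{(v,t) : v \in \Rr^{n-1},\ t \in [-1,1],\ |v|^2 + t^2 \le 1\}$, each level $t$ is a disc of radius $r_t = \sqrt{1-t^2}$, degenerating to a point at the two poles $t = \pm 1$. Given $g \in \homeo(D^{n-1};S^{n-2})$ I set $\tilde g(v,t) = (r_t\, g(v/r_t),\, t)$ for $|t| < 1$ and $\tilde g = \mathrm{id}$ at the poles. This is continuous (as $t \to \pm 1$ the acted-on level shrinks to the fixed pole), it fixes $S^{n-1}$ (a boundary point lies on the boundary of its own level disc, where $g$ is already the identity), and the rescaling $v \mapsto r_t\,(\,\cdot\,)(v/r_t)$ is a fixed conjugation applied uniformly on each level, so $\widetilde{gh} = \tilde g\,\tilde h$ and $g \mapsto \tilde g$ is an injective homomorphism $\homeo(D^{n-1};S^{n-2}) \hookrightarrow \homeo(D^n;S^{n-1})$. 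Iterating from the base case (where $\homeo(D^2;S^1)$ is $\HD$, the square and the disc being homeomorphic as pairs) embeds $\HD$ into $\homeo(D^n;S^{n-1})$ for every $n \ge 2$. Composing with the localisation embeds $\HD$, and hence its finitely generated non-left-orderable subgroup $H$, into $\homeo(M;S)$, which therefore admits no left-order.

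The genuinely hard part lies entirely in producing $H$ and certifying that it is not left-orderable, which is the core construction of the paper; the reduction above is soft point-set topology. Within the reduction the single step that needs care is the suspension: for $n \ge 3$ the naive product $f \mapsto f \times \mathrm{id}_{D^{n-2}}$ does \emph{not} fix $S^{n-1}$, since it acts as $f$ on the face $D^2 \times S^{n-3}$ of the boundary, so one cannot simply take products. The point of the level-wise rescaling is precisely that every boundary point of $D^n$ sits on the boundary sphere of its own level disc and is thus fixed automatically, which is what makes the dimension-raising homomorphism land in the boundary-fixing group.
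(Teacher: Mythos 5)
Your proof is correct, and its skeleton is exactly the paper's: localise to a ball in $M \setminus S$ by extending homeomorphisms by the identity, raise the dimension from $2$ to $n$, and then invoke the finitely generated non-left-orderable group $H$ of Section \ref{sec:disc} (Theorem \ref{thm:main}) together with the fact that left-orderability passes to subgroups. The one place you diverge is the dimension-raising step: the paper disposes of it in one line by citing Proposition 8.8 of \cite{ClayRolfsen}, whereas you construct the embedding $\homeo(D^{n-1};S^{n-2}) \hookrightarrow \homeo(D^n;S^{n-1})$ by hand, acting on each horizontal slice of $D^n$ by a rescaled copy of $g$ and by the identity at the two poles. Your construction does check out: continuity at the poles holds because $\abs{r_t\, g(v/r_t)} \le r_t \to 0$; every boundary point of $D^n$ lies on the boundary sphere of its own slice, so it is automatically fixed; the rescaling is conjugation by a fixed similarity on each slice, so $g \mapsto \tilde g$ is a homomorphism, and it is injective because the equatorial slice $t=0$ recovers $g$. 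Your closing remark --- that the naive product $f \times \mathrm{id}$ fails to land in the boundary-fixing group --- is precisely the pitfall that makes either your construction or the citation necessary. The trade-off is the expected one: the paper's reduction is three lines but leans on an external proposition (the same one that links Bergman's question to the disc question), while yours is self-contained at the cost of a paragraph of point-set verification.
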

\begin{proof}
We may choose a non-empty open subset $U$ of $M$, disjoint from $S$, and homeomorphic to $I^n$. Since $\homeo(I^n;\delta I^n)$ is isomorphic to $\homeo(U;\delta U)$ which embeds in $\homeo(M;S)$ it follows from Proposition $8.8$ of \cite{ClayRolfsen} that $\HD$ embeds in $\homeo(M;S)$. Since the group $H$, defined in Section \ref{sec:disc}, embeds in $\HD$ and by Theorem \ref{thm:main} below does not admit a left-order it follows that $\homeo(M;S)$ does not admit a left-order.
\end{proof}

Since $\HI$ admits a left-order it follows from Theorem \ref{thm:main2} that if $M$ is a manifold of dimension at least $2$, possibly with boundary, then the group of those homeomorphisms of $M$ that fix the boundary pointwise does not embed in $\HI$.

For $n \geq 1$ the group of boundary fixing piecewise linear homeomorphisms of the $n$ dimensional disc is left-orderable (see Theorem $8.6$ of \cite{ClayRolfsen}). The group of boundary fixing diffeomorphisms of the disc is also left-orderable (see \cite{Calegari}).

Beyond basic definitions we will not go into the detail of the theory of left-orderable groups. Instead we direct the reader to the book Groups, Orders and Dynamics \cite{Navas1}. We note that left-ordered groups are torsion-free. 

In Section \ref{sec:Gen} we discuss left-ordered groups in general. In Section \ref{sec:disc} we construct the group $H$ and apply the work of Section \ref{sec:Gen} to show $H$ does not admit a left-order.

\subsection*{Acknowledgements}
I thank Yash Lodha for introducing me to this problem and thank George Bergman and Yash Lodha for helpful comments.

\section{General Approach} \label{sec:Gen}

In order to facilitate the application of the argument of this document to other groups we split the argument between two sections. The work of this section applies to groups in general and the work of the next section applies to $\HD$ specifically. We write $\idnt$ for the identity of a group. For $g$ and $h$ elements of a group with left-order $<$ we write $h > g$ to indicate $g < h$ and write $g \leq h$ to indicate that either $g < h$ or $g=h$.

\begin{lem} \label{lem:list}
Let $G$ be a left-ordered group with left-order $<$. Let $u$, $v$ and $t$ in $G$ be such that $ut = tu$ and $vt =tv$. Let $m$, $n$, $n_1$ and $n_2$ be integers. Then:
\begin{enumerate}
\item \label{point:inv} $u < t^m \iff t^{-m} < u^{-1}$ (the same holds with $>$ in place of $<$).
\item \label{point:sum} $u < t^m$ and $v < t^n$ implies $uv < t^{m+n}$ (the same implication holds with $>$ in place of $<$).
\item \label{point:con} $t^{m-1} < u < t^{m}$ and $t^{n_1} < v < t^{n_2}$ implies $t^{m-2} < u^v < t^{m+1}$.
\item \label{point:bon} $u^v = u^{-1}$ and $t^{n_1} < v < t^{n_2}$ and $\idnt < t$ implies $t^{-1} < u < t$.
\end{enumerate}
\end{lem}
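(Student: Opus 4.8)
The plan is to treat the four parts using only two tools: left-invariance of $<$ and the hypothesis that $u$ and $v$ commute with $t$ (hence so does $u^v = v^{-1} u v$), which lets me slide any power of $t$ past these elements and thereby convert an illegal right-multiplication into a legal left-multiplication. Parts (1) and (2) are routine in this spirit. For (1), from $u < t^m$ I left-multiply by $u^{-1}$ and then by $t^{-m}$ and simplify $t^{-m} u^{-1} t^m = u^{-1}$ to reach $t^{-m} < u^{-1}$; the converse and the $>$ version are identical. For (2), I left-multiply $v < t^n$ by $u$ to get $uv < u t^n = t^n u$, left-multiply $u < t^m$ by $t^n$ to get $t^n u < t^{m+n}$, and conclude by transitivity.

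The real content is in (4), proved by a growth argument. Using the symmetry $u \mapsto u^{-1}$, which preserves both the hypothesis $u^v = u^{-1}$ and the conclusion, I may assume $u \geq t$ and seek a contradiction. Since $u$ commutes with $t$ and $u \geq t > \idnt$, repeated left-multiplication gives $u^k \geq t^k$, so $u^{-k} \leq t^{-k}$ by (1). But $u^v = u^{-1}$ gives $(u^k)^v = u^{-k}$, and applying the additivity of (2) to $v^{-1} u^k v$ with $t^{-n_2} < v^{-1}$, $t^{k-1} < u^k$ and $t^{n_1} < v$ yields $u^{-k} > t^{\,k-1+n_1-n_2}$. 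Chaining $t^{-k} \geq u^{-k} > t^{\,k-1+n_1-n_2}$ forces $2k < 1 + n_2 - n_1$, false for large $k$. Hence $u < t$, and by symmetry $t^{-1} < u$.

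Part (3) I would reduce to the case $m = 0$. Writing $w = u t^{-m}$, the bounds $t^{m-1} < u < t^m$ left-multiply to $t^{-1} < w < \idnt$, and $u^v = w^v t^m$, so it is enough to trap $w^v$ in $(t^{-2}, t)$; multiplying through by $t^m$ then gives $t^{m-2} < u^v < t^{m+1}$. This trapping is the same growth argument: from $t^{-1} < w < \idnt$ I get $t^{-k} < w^k < \idnt$, and I use the identity $(w^k)^v = (w^v)^k$. If $w^v \geq t$ then $(w^v)^k \geq t^k$, yet (2) applied to $v^{-1} w^k v$ with $w^k < \idnt$ caps $(w^k)^v < t^{\,n_2-n_1}$, forcing $k < n_2 - n_1$; if $w^v \leq t^{-2}$ then $(w^v)^k \leq t^{-2k}$, yet (2) gives $(w^k)^v > t^{\,-k+n_1-n_2}$, again forcing $k < n_2 - n_1$. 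Both fail for large $k$.

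I expect the main obstacle to be recognising that the additivity of (2) is by itself too lossy to prove (3) and (4): one application to $v^{-1} u v$ only locates $u^v$ within $t^{\,n_2-n_1}$ of $u$, which says nothing when the window for $v$ is wide. The idea that saves the argument is that conjugation by a $t$-bounded $v$ perturbs the $t$-exponent only additively and by a bounded amount, while passing to $k$-th powers makes the genuine size grow linearly in $k$, so a large enough $k$ outruns the fixed perturbation. The only delicate bookkeeping is the asymmetry of the window in (3)—why $t^{-2}$ on the left but $t$ on the right—which comes out of the exponent comparisons above.
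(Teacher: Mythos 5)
Your proof is correct, but for the two substantive parts, (3) and (4), it takes a genuinely different route from the paper's. The paper's key device, which you did not find, is to tighten the window on $v$: since the powers of $t$ form a chain (and the hypotheses force $\idnt < t$), the bound $t^{n_1} < v < t^{n_2}$ yields an integer $n_0$ with $t^{n_0} \leq v < t^{n_0+1}$, hence $t^{-(n_0+1)} < v^{-1} \leq t^{-n_0}$ by (1). With this, (3) is immediate from (2): $t^{m-2} = t^{-(n_0+1)}t^{m-1}t^{n_0} < v^{-1}uv < t^{-n_0}t^{m}t^{n_0+1} = t^{m+1}$; this also explains the asymmetric window $(t^{m-2}, t^{m+1})$ you flagged at the end --- it is exactly the width-one slack of $v$, spent once below and once above. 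For (4) the paper rewrites $u^v = u^{-1}$ as $uvu = v$ and kills the two bad cases $u < t^{-1}$ and $t < u$ by applying (2) to $uvu$ and comparing against the width-one window for $v$. Your substitute --- pass to $k$-th powers, observe that conjugation by $v$ shifts $t$-exponents by at most the fixed amount $n_2 - n_1$ while the exponent bounds on $u^k$ and $w^k$ grow linearly in $k$, then take $k$ large --- is a valid Archimedean-style argument, and your reduction of (3) to the case $m=0$ via $w = ut^{-m}$, together with the commutation facts you need ($u^v$, $w^v$, $u^k$ all commute with $t$), checks out. The trade-off: the paper's proof is finitary and three lines long; yours is longer and runs a large-parameter contradiction, but it isolates a robust principle (bounded conjugation distortion loses to linear growth) and never needs to localize $v$ inside its window. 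Two small points to make explicit in a final write-up: the hypotheses of (3) force $\idnt < t$ (left-multiply $t^{m-1} < u < t^{m}$ by $t^{1-m}$), which all of your exponent comparisons silently use; and you repeatedly invoke non-strict variants of (1) and (2), which do follow from the strict ones but deserve a sentence handling the equality cases.
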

\begin{proof}
With the same numbering as in the statement of the lemma.
\begin{enumerate}

\item We have $u < t^m \iff u^{-1}t^{-m}u < u^{-1}t^{-m}t^m \iff t^{-m} <u^{-1}$ since $ut = tu$.

\item $uv < ut^n = t^nu < t^nt^m = t^{m+n}$.

\item Since $t^{n_1} < v < t^{n_2}$ it follows that there exists an integer $n_0$ such that $t^{n_0} \leq v < t^{{n_0}+1}$. By Point \ref{point:inv} we have $t^{-(n_0+1)} < v^{-1} \leq t^{-n_0}$. Therefore using Point \ref{point:sum} it follows that $t^{m-2} = t^{-(n_0+1)}t^{m-1}t^{n_0} < v^{-1}uv < t^{-n_0}t^mt^{n_0+1} = t^{m+1}$.


\item Since $t^{n_1} < v < t^{n_2}$ it follows that there exists an integer $n_0$ such that $t^{n_0} \leq v < t^{{n_0}+1}$.

If $u < t^{-1}$ then by Point \ref{point:sum} we have $v = uvu < t^{-1}t^{n_0+1}t^{-1} = t^{n_0-1} < t^{n_0}$ a contradiction.

If $t < u$ then by Point \ref{point:sum} we have $t^{n_0+1} < t^{n_0+2} = tt^{n_0}t < uvu = v$ also a contradiction.



\end{enumerate}
\end{proof}

For $g$ an element of a left-ordered group we will write $|g|$ for the greater of $g^{-1}$ and $g$.


\begin{lem} \label{lem:gen}
Let $a$, $b$, $c$ and $d$ be non-identity elements of a left-ordered group $G$ with left-order $<$. Assume $ab = ba$ and $bc = cb$ and $bd = db$ and $c^{(a^3)} = c^{-1}$ and $d^{(a^3)} = d^{-1}$ and $|a| < |b|$. Then $|c^d c^{da} c^{da^{2}} c^{da^{3}} c^{da^{4}}  c^{da^{5}}| < |b^{12}|$.
\end{lem}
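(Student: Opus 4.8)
My plan is to measure everything against the positive element $\beta = |b|$. Because $b$ is non-identity and left-ordered groups are torsion-free, $\idnt < \beta$; and since $\beta$ is either $b$ or $b^{-1}$ it commutes with $a$, $c$ and $d$. As $|b^{12}| = \beta^{12}$, it suffices to bound $P := c^d c^{da} c^{da^2} c^{da^3} c^{da^4} c^{da^5}$ and show $|P| < \beta^{12}$. From $|a| < |b| = \beta$ I read off $\beta^{-1} < a < \beta$, and iterating parts (\ref{point:inv}) and (\ref{point:sum}) of Lemma \ref{lem:list} with $t = \beta$ (legitimate since $a$ commutes with $\beta$) gives $\beta^{-3} < a^{3} < \beta^{3}$, so $a^3$ sits between powers of $\beta$.

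Next I would confine $c$ and $d$ to a single window. Applying part (\ref{point:bon}) of Lemma \ref{lem:list} with $u = c$, $v = a^3$, $t = \beta$ — its hypotheses $c^{a^3} = c^{-1}$, $\beta^{-3} < a^3 < \beta^3$, and $\idnt < \beta$ all being in hand — yields $\beta^{-1} < c < \beta$, and the identical application to $d$ gives $\beta^{-1} < d < \beta$. Since $c \neq \idnt$ and the only power of $\beta$ strictly between $\beta^{-1}$ and $\beta$ is $\idnt$, the element $c$ lies in exactly one of $(\beta^{-1}, \idnt)$ or $(\idnt, \beta)$; that is, there is an integer $m \in \{0,1\}$ with $\beta^{m-1} < c < \beta^{m}$.

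The heart of the argument is that each factor of $P$ is the single width-one element $c$ conjugated by a bounded element: $c^{da^i} = c^{(da^i)}$, and since $d$ and $a^i$ each lie between powers of $\beta$, so does $da^i$ by part (\ref{point:sum}). The decisive point is that part (\ref{point:con}) of Lemma \ref{lem:list} enlarges a width-one window of $\beta$-powers to a width-three window \emph{no matter how large the conjugating element is}; applying it with $u = c$, $v = da^i$, $t = \beta$ therefore gives $\beta^{m-2} < c^{da^i} < \beta^{m+1}$ for every $i$, the same window for all six factors. As $c$, $d$ and $a$ all commute with $\beta$, so does every factor and every partial product, so iterating part (\ref{point:sum}) gives $\beta^{6(m-2)} < P < \beta^{6(m+1)}$. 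For $m = 0$ this is $\beta^{-12} < P < \beta^{6}$ and for $m = 1$ it is $\beta^{-6} < P < \beta^{12}$; in both cases part (\ref{point:inv}) turns the lower bound into an upper bound on $P^{-1}$, and one checks $P, P^{-1} < \beta^{12}$, whence $|P| < \beta^{12} = |b^{12}|$.

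I expect the main obstacle to be recognising — and it is the whole trick — that conjugation by a bounded element widens a window of $\beta$-powers by only one step on each side, independently of the size of the conjugator. This is what stops the six successive conjugates from accumulating error, keeping them all within a common width-three window; the interplay between the six factors and the asymmetric target $\beta^{12}$ is precisely calibrated so that both admissible values $m = 0$ and $m = 1$ yield the bound.
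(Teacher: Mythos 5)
Your proof is correct and follows essentially the same route as the paper's: normalize to the positive element $|b|$, bound $a$ (hence $a^3$) by its powers, apply Point \ref{point:bon} of Lemma \ref{lem:list} to trap $c$ and $d$, Point \ref{point:con} to trap each conjugate $c^{da^i}$, and Point \ref{point:sum} to multiply the six windows together. The only differences are bookkeeping: the paper symmetrizes each factor's window to $b^{-2} < c^{da^i} < b^{2}$ before multiplying, giving $b^{-12} < P < b^{12}$ at once, whereas you carry the asymmetric window $\beta^{m-2} < c^{da^i} < \beta^{m+1}$ through the product and case-check $m \in \{0,1\}$ at the end (and you make explicit the width-one pinning of $c$ that the paper leaves implicit).
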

\begin{proof}
By symmetry we may assume $\idnt < b$. Therefore $b^{-1} < a < b$. By Point \ref{point:sum} and \ref{point:bon} of Lemma \ref{lem:list} we have $b^{-1} < c < b$ and $b^{-1} < d < b$.

By Point \ref{point:con} of Lemma \ref{lem:list} for each $g \in \{c^d, c^{da}, c^{da^2}, c^{da^3}, c^{da^4}, c^{da^5}\}$ we have $b^{-2} < g < b^2$. Therefore by Point \ref{point:sum} of Lemma \ref{lem:list} we have $b^{-12} < c^d c^{da} c^{da^2} c^{da^3} c^{da^4} c^{da^5} < b^{12}$.
\end{proof}

\section{The Group of Boundary Fixing Homeomorphisms of the Disc} \label{sec:disc}
Fix $\rho$ the homeomorphism from the unit disc to the plane defined, in polar coordinates, by the rule $\rho:(r,\theta) \mapsto (r/(1-r),\theta)$.
Let $f$ be the function from the group of boundary fixing homeomorphisms of the disc to the group of homeomorphisms of the plane defined by the rule $f:\gamma \mapsto \rho^{-1}\gamma\rho$. Since $\rho$ is a homeomorphism it follows that $f$ is an embedding.

Instead of working in the group of boundary fixing homeomorphisms of the disc we work in the image of $f$. For the rest of the document we work with Cartesian coordinates only.

Fix $\alpha:\Rr^2 \to \Rr^2$ defined by the rule $\alpha:(x,y) \mapsto (x+(1/6),y)$. Similarly, fix $\beta:\Rr^2 \to \Rr^2$ defined by the rule $\beta:(x,y) \mapsto (x,y+(1/6))$. Note $\alpha$ and $\beta$ are in the image of $f$.

Define a continuous piecewise linear function $\gamma_0:\Rr \to [-3,3]$ by linearly interpolating from the rules: for each integer $n$
\begin{itemize}
\item $(n)\gamma_0 = 3$, and
\item $(n+(1/2))\gamma_0 = -3$.
\end{itemize}

Fix $\gamma: \Rr^2 \to \Rr^2$ defined by the rule $\gamma:(x,y) \mapsto (x,x\gamma_0 +y)$. Note $\gamma$ is in the image of $f$.

Define a continuous piecewise linear bijection $\delta_0:\Rr \to \Rr$ by linearly interpolating from the rules: for each integer $n$
\begin{itemize}
\item $(n+(1/3))\delta_0 = n+(1/6)$, and
\item $(n+(2/3))\delta_0 = n+(5/6)$.
\end{itemize}

Fix $\delta:\Rr^2 \to \Rr^2$ defined by the rule $\delta:(x,y) \mapsto (x\delta_0,y)$. Note $\delta$ is in the image of $f$.

For each $x$ in $\Rr$ note any element of $\An{\alpha, \beta, \gamma, \delta}$ maps the vertical line $\Ss{(x,y)}{y \in \Rr}$ to another vertical line by an isometry.

For $g$ and $h$ elements of a group $G$ we will write $g \abbor h$ in $G$ if for every left-order $<$ on $G$ we have $|g| < |h|$.

\begin{lem}\label{lem:CM}
$\beta \abbor \alpha$ in $\An{\alpha,\beta,\gamma,\delta}$.
\end{lem}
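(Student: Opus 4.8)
The plan is to invoke Lemma~\ref{lem:gen} with the assignment $a=\alpha$, $b=\beta$, $c=\gamma$ and $d=\delta$, and to turn its conclusion into a contradiction after computing the product $W := \gamma^{\delta}\gamma^{\delta\alpha}\gamma^{\delta\alpha^{2}}\gamma^{\delta\alpha^{3}}\gamma^{\delta\alpha^{4}}\gamma^{\delta\alpha^{5}}$ explicitly. First I would verify the algebraic hypotheses straight from the coordinate formulas. That $\alpha$ commutes with $\beta$, and that $\beta$ commutes with each of $\gamma$ and $\delta$, is immediate, since $\beta$ translates in $y$ while $\delta$ acts only in $x$ and $\gamma$ only adds a function of $x$ to $y$. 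The two inversion relations $\gamma^{\alpha^{3}}=\gamma^{-1}$ and $\delta^{\alpha^{3}}=\delta^{-1}$ are where the chosen constants matter: conjugation by $\alpha^{3}$ shifts the $x$-coordinate by $1/2$, so I would use the antisymmetry $(x-1/2)\gamma_0=-(x)\gamma_0$ for the first relation, and the fact that the breakpoints of $x\mapsto(x-1/2)\delta_0+1/2$ coincide with those of $\delta_0^{-1}$ for the second.

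Next I would compute $W$. Every element of $\An{\alpha,\beta,\gamma,\delta}$ sends each vertical line to a vertical line by a vertical translation, so it is determined by its $x$-action together with a translation amount depending on $x$. Since $\gamma$ fixes the $x$-coordinate, so does every conjugate $\gamma^{\delta\alpha^{k}}$, and a short conjugation computation gives $\gamma^{\delta\alpha^{k}}\colon(x,y)\mapsto(x,\,y+\gamma_0((x-k/6)\delta_0^{-1}))$. These six factors all fix $x$, hence commute, and their translation amounts add, so $W\colon(x,y)\mapsto(x,\,y+\sum_{k=0}^{5}\gamma_0((x-k/6)\delta_0^{-1}))$. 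The crux is to show this sum is the constant $-6$, so that $W$ is the pure vertical translation $(x,y)\mapsto(x,y-6)$; that is, $W=\beta^{-36}$, whence $|W|=|\beta^{36}|$.

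With $W$ in hand I would assemble the contradiction. The four homeomorphisms $\alpha,\alpha^{-1},\beta,\beta^{-1}$ are pairwise distinct, so $|\alpha|\neq|\beta|$, and hence in any left-order exactly one of $|\beta|<|\alpha|$ or $|\alpha|<|\beta|$ holds. Suppose toward a contradiction that some left-order satisfies $|\alpha|<|\beta|$. Then all hypotheses of Lemma~\ref{lem:gen} are met, so $|W|<|\beta^{12}|$; but $W=\beta^{-36}$ gives $|\beta^{36}|=|W|<|\beta^{12}|$. Taking $\idnt<\beta$ without loss of generality (as $|\beta|=|\beta^{-1}|$), we have $\beta^{12}<\beta^{36}$ and therefore $|\beta^{12}|<|\beta^{36}|$, a contradiction. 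Hence $|\beta|<|\alpha|$ in every left-order, i.e. $\beta\abbor\alpha$ in $\An{\alpha,\beta,\gamma,\delta}$.

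The main obstacle is the constancy claim for $\sum_{k=0}^{5}\gamma_0((x-k/6)\delta_0^{-1})$: each individual term is a genuinely $x$-dependent, warped copy of $\gamma_0$ (the map $\delta_0^{-1}$ being nonlinear, with slopes $1/2$ and $2$ between its breakpoints), so the content of the computation is that the six $1/6$-translates cancel all the non-constant fluctuation. I would prove this by observing that the sum is piecewise linear with breakpoints drawn from a single controlled finite set within each unit period, so that constancy reduces to evaluating it at one sample point per linear piece; the intuition is that averaging over the six $1/6$-shifts kills every Fourier mode of $\gamma_0\circ\delta_0^{-1}$ except those of period dividing $1/6$, which the construction arranges to be absent apart from the mean value $-1$. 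Carrying out this cancellation cleanly, rather than by a brute-force case analysis, is the step I would spend the most care on.
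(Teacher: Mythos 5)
Your proposal has the same skeleton as the paper's own proof: the same assignment $a=\alpha$, $b=\beta$, $c=\gamma$, $d=\delta$ into Lemma~\ref{lem:gen}, the same target identity $\gamma^{\delta}\gamma^{\delta\alpha}\gamma^{\delta\alpha^{2}}\gamma^{\delta\alpha^{3}}\gamma^{\delta\alpha^{4}}\gamma^{\delta\alpha^{5}}=\beta^{-36}$, and the same contradiction. Your conjugation formula $(x,y)\gamma^{\delta\alpha^{k}}=(x,\,y+((x-k/6)\delta_0^{-1})\gamma_0)$ is correct, and your handling of the endgame is in one respect \emph{more} careful than the paper's: you note that $\alpha^{\pm1},\beta^{\pm1}$ are four distinct maps, so $|\alpha|\neq|\beta|$ and refuting $|\alpha|<|\beta|$ genuinely yields $|\beta|<|\alpha|$.

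However, the attempt stops exactly where the real work begins: you never establish that $\sum_{k=0}^{5}((x-k/6)\delta_0^{-1})\gamma_0$ is identically $-6$. You compute no value of $\gamma_0\circ\delta_0^{-1}$ at any point, so the constant $-6$ (hence the identification with $\beta^{-36}$, hence the contradiction) is asserted rather than derived; this is the entire content of the paper's proof. Moreover, neither of your two suggested routes closes the gap as stated. Evaluating ``one sample point per linear piece'' cannot prove constancy -- a linear piece of nonzero slope still passes through the value $-6$ somewhere; you need the values at \emph{all} breakpoints, i.e.\ at the multiples of $1/6$. And the Fourier claim (that $\gamma_0\circ\delta_0^{-1}$ has no nonzero modes at frequencies divisible by $6$) is precisely \emph{equivalent} to the constancy you want, so invoking it is a restatement, not an argument. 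The paper closes this step with the rigorous form of your averaging intuition: since $\gamma^{\delta\alpha^{6}}=\gamma^{\delta}$, conjugation by $\alpha$ permutes the six factors, which commute (each fixes every vertical line setwise and translates along it), so $\alpha$ commutes with the product -- equivalently, your sum is $1/6$-periodic. Its breakpoints lie in $\frac{1}{6}\Z$ (the breakpoints of $\delta_0^{-1}$ are at $n\pm 1/6$, and $\delta_0$ fixes each point of $\frac{1}{2}\Z$, so the $\gamma_0$-breakpoints pull back into $\frac{1}{2}\Z$), and a $1/6$-periodic piecewise linear function with breakpoints only in $\frac{1}{6}\Z$ is constant. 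So a single evaluation at $x=0$ finishes: the six translation amounts there are $3,-1,-2,-3,-2,-1$, summing to $-6$. Without this (or the equivalent six-breakpoint check), your proof is a correct frame around an unproved core.
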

\begin{proof}
Assume $<$ is a left-order on $\An{\alpha,\beta,\gamma,\delta}$. By inspection $\alpha\beta = \beta\alpha$ and $\beta\gamma = \gamma\beta$ and $\beta\delta = \delta\beta$ and $\gamma^{(\alpha^3)} = \gamma^{-1}$ and $\delta^{(\alpha^3)} = \delta^{-1}$.

It is sufficient to show
$\gamma^\delta
\gamma^{\delta\alpha}
\gamma^{\delta\alpha^2}
\gamma^{\delta\alpha^3}
\gamma^{\delta\alpha^4}
\gamma^{\delta\alpha^5}
=
\beta^{-36}$
because with $a = \alpha$, $b = \beta$, $c = \gamma$ and $d = \delta$ this equality contradicts $|c^d c^{da} c^{da^{2}} c^{da^{3}} c^{da^{4}}  c^{da^{5}}| < |b^{12}|$ and all the assumptions of Lemma \ref{lem:gen} apart from $|\alpha| < |\beta|$ are true.

Let $\varepsilon:= \gamma^\delta \gamma^{\delta\alpha} \gamma^{\delta\alpha^2} \gamma^{\delta\alpha^3} \gamma^{\delta\alpha^4} \gamma^{\delta\alpha^5}$. For each integer $n$ and each real number $y$ note
\begin{itemize}
\item $(n,y)\gamma^\delta = (n,y+3)$,
\item $(n+(1/6),y)\gamma^\delta = (n+(1/6),y-1)$,
\item $(n+(1/2),y)\gamma^\delta = (n+(1/2),y-3)$, and
\item $(n+(5/6),y)\gamma^\delta = (n+(5/6),y-1)$.
\end{itemize}
This accounts for all the non-differentiable points of $\gamma^\delta$. In particular all of the non-differentiable points of $\gamma^\delta$ have $x$ coordinate a multiple of $1/6$. Since the set of points with $x$ coordinate a multiple of $1/6$ is setwise stabilised by both $\alpha$ and $\gamma^\delta$ it follows that each non-differentiable point of $\varepsilon$ has $x$ coordinate a multiple of $1/6$. Consequently it is sufficient to show that $\varepsilon$ agrees with $\beta^{-36}$ on points with $x$ coordinate a multiple of $1/6$.

Since $\gamma^{\delta\alpha^6} = \gamma^\delta$ it follows conjugation by $\alpha$ permutes the set of conjugates appearing in the definition of $\varepsilon$. The conjugates appearing in the definition of $\varepsilon$ commute so $\alpha$ commutes with $\varepsilon$. Therefore it is sufficient to check $\varepsilon$ agrees with $\beta^{-36}$ on the points with $x$ coordinate $0$.

For $y$ a real number we have
\begin{itemize}
\item $(0,y) \gamma^\delta = (0,y+3)$,
\item $(0,y) \gamma^{\delta\alpha} = (0,y-1)$,
\item $(0,y) \gamma^{\delta\alpha^2} = (0,y-2)$,
\item $(0,y) \gamma^{\delta\alpha^3} = (0,y-3)$,
\item $(0,y) \gamma^{\delta\alpha^4} = (0,y-2)$, and
\item $(0,y) \gamma^{\delta\alpha^5} = (0,y-1)$.
\end{itemize}
Consequently, for $y$ still a real number,
\begin{align*}
(0,y)\varepsilon &= (0,y+3-1-2-3-2-1) \label{line:sum} \\
              &= (0,y-6)\\
              &= (0,y)\beta^{-36}
\end{align*}
and $\varepsilon = \beta^{-36}$, as desired.
\end{proof}

Fix $\eta:\Rr^2 \to \Rr^2$ defined by the rule $\eta:(x,y) \mapsto (y,x)$. The homeomorphism $\eta$ is not in the image of $f$, but conjugation by $\eta$ is an automorphism of the image of $f$. In particular $\gamma^\eta$ and $\delta^\eta$ are in the image of $f$. Note also that $\alpha^\eta = \beta$ and that $\eta$ is an involution. Let $H:= \An{\alpha,\beta,\gamma,\delta,\gamma^\eta,\delta^\eta}$. Note conjugation by $\eta$ is an automorphism of $H$.

\begin{thm} \label{thm:main}
The group $H$ does not admit a left-order.
\end{thm}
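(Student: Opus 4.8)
The plan is to play the group against its own $\eta$-symmetry. The key point is that $H$ contains two subgroups swapped by conjugation by $\eta$: the subgroup $\An{\alpha,\beta,\gamma,\delta}$, to which Lemma \ref{lem:CM} applies directly, and its image $\An{\alpha,\beta,\gamma^\eta,\delta^\eta}$. Because $\alpha^\eta = \beta$ and, since $\eta$ is an involution, $\beta^\eta = \alpha$, these two subgroups share the generators $\alpha$ and $\beta$ but with their roles exchanged. Lemma \ref{lem:CM} forces $\beta \abbor \alpha$ in the first, and transporting it across the symmetry should force $\alpha \abbor \beta$ in the second. Since a left-order on $H$ restricts to a left-order on each subgroup, these two incompatible conclusions would both have to hold in $H$, which is the contradiction I am after.

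First I would make precise the transport of Lemma \ref{lem:CM} under conjugation by $\eta$. As this conjugation is an automorphism of $H$, it restricts to an isomorphism from $\An{\alpha,\beta,\gamma,\delta}$ onto $\An{\alpha,\beta,\gamma^\eta,\delta^\eta}$ sending $\alpha \mapsto \beta$, $\beta \mapsto \alpha$, $\gamma \mapsto \gamma^\eta$ and $\delta \mapsto \delta^\eta$. Given any left-order $\prec$ on $\An{\alpha,\beta,\gamma^\eta,\delta^\eta}$, I would pull it back along this isomorphism to obtain a left-order $<$ on $\An{\alpha,\beta,\gamma,\delta}$. Lemma \ref{lem:CM} applied to $<$ gives $|\beta| < |\alpha|$; pushing this forward along the isomorphism, which interchanges $\alpha$ and $\beta$, yields $|\alpha| \prec |\beta|$. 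As $\prec$ was arbitrary, this establishes $\alpha \abbor \beta$ in $\An{\alpha,\beta,\gamma^\eta,\delta^\eta}$.

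With the dual statement in hand, the theorem follows quickly. Suppose for contradiction that $<$ is a left-order on $H$. Restricting $<$ to $\An{\alpha,\beta,\gamma,\delta}$ and applying Lemma \ref{lem:CM} gives $|\beta| < |\alpha|$. Restricting $<$ to $\An{\alpha,\beta,\gamma^\eta,\delta^\eta}$ and applying the transported statement gives $|\alpha| < |\beta|$. These two strict inequalities are contradictory, so $H$ admits no left-order.

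I expect the only point requiring care to be the bookkeeping in the transport step: I need that conjugation by $\eta$ carries the absolute-value operation for $<$ to the absolute-value operation for $\prec$, that is, that the image of the $<$-larger of $\beta$ and $\beta^{-1}$ is the $\prec$-larger of $\alpha$ and $\alpha^{-1}$. This holds because the isomorphism is an order isomorphism from $<$ to $\prec$ and $|g|$ is defined purely order-theoretically as the larger of $g$ and $g^{-1}$. No further computation is required; everything else is the standard fact that a left-order restricts to every subgroup.
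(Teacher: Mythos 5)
Your proof is correct and takes essentially the same approach as the paper: Lemma \ref{lem:CM} plus the $\eta$-symmetry forces the contradictory conclusions $|\beta| < |\alpha|$ and $|\alpha| < |\beta|$ for any putative left-order on $H$. The paper merely does the bookkeeping in the opposite order --- it first promotes $\beta \abbor \alpha$ from $\An{\alpha,\beta,\gamma,\delta}$ to $H$ by restriction of orders and then applies the automorphism of $H$ given by conjugation by $\eta$, rather than transporting the statement along the isomorphism between the two subgroups --- but the content is identical.
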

\begin{proof}
By Lemma \ref{lem:CM} we have $\beta \abbor \alpha$ in $\An{\alpha,\beta,\gamma,\delta}$ and therefore also in $H$. Since conjugation by $\eta$ is an automorphism of $H$ we also have $\alpha = \beta^\eta \abbor \alpha^\eta = \beta$ in $H$. Since no left-order can have $\alpha < \beta$ and $\beta < \alpha$ no left-order on $H$ can exist.

\end{proof}


\end{document}